\numberwithin{equation}{section}
\theoremstyle{plain} 
\newtheorem{thm}{Theorem}[section]
\newtheorem{lm}[thm]{Lemma}
\newtheorem{prp}[thm]{Proposition}
\theoremstyle{definition}
\newtheorem{df}[thm]{Definition}
\newtheorem{remark}[thm]{Remark}
\newtheorem{case}{Case}
\newcommand{\A}{\mathbf{A}}
\newcommand{\dmn}{\mathrm{dmn}}
\newcommand{\con}{\mathrm{Con}}
\newcommand{\vp}{\mathbf{p}}
\newcommand{\vq}{\mathbf{q}}
\newcommand{\vx}{\mathbf{x}}
\newcommand{\vy}{\mathbf{y}}
\newcommand{\termcube}[8]{
  \begin{tikzpicture}
\filldraw[black] (0,0) circle (2pt) node[anchor=east] {$ #1 $};
\filldraw[black] (0,2) circle (2pt) node[anchor=east] {$ #2 $};
\filldraw[black] (2,0) circle (2pt) node[anchor=west] {$ \; #3 $};
\filldraw[black] (2,2) circle (2pt) node[anchor=west] {$ \; #4 $};

\draw [very thick] (0,0) -- (0,2);
\draw [very thick] (2,0) -- (2,2);
\draw (0,0) -- (2,0);
\draw (0,2) -- (2,2);

\filldraw[black] (1,-.5) circle (2pt) node[anchor=east] {$ #5 \; $};
\filldraw[black] (1,1.5) circle (2pt) node[anchor=east] {$ #6 \; $};
\filldraw[black] (3,-.5) circle (2pt) node[anchor=west] {$ #7 $};
\filldraw[black] (3,1.5) circle (2pt) node[anchor=west] {$ #8 $};

\draw [very thick] (1,-.5) -- (1,1.5);
\draw [very thick, dashed] (3,-.5) -- (3,1.5);
\draw (1,-.5) -- (3,-.5);
\draw (1,1.5) -- (3,1.5);

\draw (0,2) -- (1,1.5);
\draw (2,2) -- (3,1.5);
\draw (0,0) -- (1,-.5);
\draw (2,0) -- (3,-.5);

\end{tikzpicture}
}
\newcommand{\tcube}[8]{
  \begin{tikzpicture}
\filldraw[black] (0,0) circle (2pt) node[anchor=east] {$ #1 $};
\filldraw[black] (0,2) circle (2pt) node[anchor=east] {$ #2 $};
\filldraw[black] (2,0) circle (2pt) node[anchor=west] {$ \; #3 $};
\filldraw[black] (2,2) circle (2pt) node[anchor=west] {$ \; #4 $};

\draw (0,0) -- (0,2);
\draw (2,0) -- (2,2);
\draw (0,0) -- (2,0);
\draw (0,2) -- (2,2);

\filldraw[black] (1,-.5) circle (2pt) node[anchor=east] {$ #5 \; $};
\filldraw[black] (1,1.5) circle (2pt) node[anchor=east] {$ #6 \; $};
\filldraw[black] (3,-.5) circle (2pt) node[anchor=west] {$ #7 $};
\filldraw[black] (3,1.5) circle (2pt) node[anchor=west] {$ #8 $};

\draw (1,-.5) -- (1,1.5);
\draw (3,-.5) -- (3,1.5);
\draw (1,-.5) -- (3,-.5);
\draw (1,1.5) -- (3,1.5);

\draw (0,2) -- (1,1.5);
\draw (2,2) -- (3,1.5);
\draw (0,0) -- (1,-.5);
\draw (2,0) -- (3,-.5);

\end{tikzpicture}
}
\newcommand{\termsquare}[4]{
\begin{tikzpicture}
\filldraw[black] (0,0) circle (2pt) node[anchor=east] {$ #1 $};
\filldraw[black] (0,2) circle (2pt) node[anchor=east] {$ #2 $};
\filldraw[black] (2,0) circle (2pt) node[anchor=west] {$ #3 $};
\filldraw[black] (2,2) circle (2pt) node[anchor=west] {$ #4 $};

\draw [very thick] (0,0) -- (0,2);
\draw [very thick, dashed] (2,0) -- (2,2);
\draw (0,0) -- (2,0);
\draw (0,2) -- (2,2);

\end{tikzpicture}
}
\newcommand{\tsquare}[4]{
\begin{tikzpicture}
\filldraw[black] (0,0) circle (2pt) node[anchor=east] {$ #1 $};
\filldraw[black] (0,2) circle (2pt) node[anchor=east] {$ #2 $};
\filldraw[black] (2,0) circle (2pt) node[anchor=west] {$ #3 $};
\filldraw[black] (2,2) circle (2pt) node[anchor=west] {$ #4 $};

\draw (0,0) -- (0,2);
\draw (2,0) -- (2,2);
\draw (0,0) -- (2,0);
\draw (0,2) -- (2,2);

\end{tikzpicture}
}
\begin{document}


\title[Central Series for Simple Algebras]{On the Descending Central Series of Higher Commutators for Simple Algebras}

\author[Steven Weinell]{Steven Weinell}
\email{steven.weinell@colorado.edu}
\address{Department of Mathematics\\
University of Colorado\\
Boulder, CO 80309-0395\\
USA}

\thanks{This material is based upon work supported by
the National Science Foundation grant no.\ DMS 1500254.}
\subjclass[2010]{Primary: 08A05; Secondary: 08A30}
\keywords{central series, higher commutator, nilpotent, simple, supernilpotent}

\begin{abstract}
This paper characterizes the potential behaviors of higher commutators in a simple algebra.
\end{abstract}

\maketitle





\section{Introduction}\label{intro}
In \cite{bulatov}, Bulatov defined a higher commutator for general algebraic structures. Using this higher commutator we may define \emph{the descending central series of higher commutators}:
	\begin{enumerate}
	\item $\theta_1 = 1_\A$
	\item $\theta_{m} = [\underbrace{1_\A, \dots, 1_\A}_{n \text{ many}}]$ for $m \geq 2$.
	\end{enumerate}
Higher commutators satisfy $[\alpha_1, \dots, \alpha_m, \alpha_{m+1}] \leq [\alpha_1, \dots, \alpha_m]$ for congruences $\alpha_1, \dots, \alpha_{m+1}$. So the descending central series of higher commutators is weakly descending. More precisely, the descending central series of higher commutators forms a weakly descending chain in the lattice of congruences of $\A$. We separate all weakly descending chains $(\theta_{m})_m$ in a two element lattice with $\theta_1 = 1$ into three types. We can have a weakly descending chain which never descends, so $\theta_m = 1$ for all $m$. We can have a chain which immediately descends, so $\theta_m = 0$ for all $m \geq 2$. And finally we can have the general case where there is some $n \geq 2$ with 
\[ 
\theta_1 = \theta_2 = \dots = \theta_n = 1_\A
\]
and 
\[
\theta_{n+1} = \theta_{n+2} = \dots = 0_\A
\]
This paper will establish that any of these possibilities may be represented as the descending central series of higher commutators for some algebra. Representing the first two possibilities is not difficult. Our main theorem will construct an algebra which represents the general case.

An algebra $\A$ is \emph{supernilpotent} if the descending central series of higher commutators has some $n$ such that $\theta_n = 0_\A$. The study of supernilpotence has driven much of the study of descending central series of higher commutators. In \cite{aichinger-mudrinski} Aichinger and Mudrinski established that in a Mal'cev algebra supernilpotence implies nilpotence. In \cite{kearnes-szendrei} Kearnes and Szendrei showed that this holds in any finite algebra. In \cite{moore-moorhead} it was shown by Moore and Moorhead that supernilpotence does not always imply nilpotence. A corollary of the theorem in this paper, when $n=2$, is that there exists a nonabelian simple algebra which is supernilpotent. Such an algebra cannot be nilpotent.




\section{Preliminaries}\label{preliminaries}

In this paper the algebra that we analyze, $\A$, will be simple, i.e. our algebra will only have two congruences, $1_\A$ and $0_\A$. We will take advantage of this fact to allow ourselves to simplify our arguments in two ways. First, we recall that in the congruence lattice of $\A$, $\con(\A)$, we have $[\theta_1, \theta_2, \dots, \theta_n] \leq \theta_i$ for all $1 \leq i \leq n$. So in a simple algebra, we have
\[
\text{If there exists an } i \text{ with } \theta_i = 0_\A \text{ then } [\theta_1, \theta_2, \dots, \theta_n] = 0_\A 
\]
Thus we will only be concerned with higher commutators of all $1_\A$'s. Second, we will simplify notation from what is normally needed to discuss higher commutators in a general algebra. This chapter is devoted to describing our simplified notation.

For a more general exposition on centralizers and higher commutators see, for example, \cite{freese-mckenzie}. 

\begin{remark} 
It will be convenient to now state standardized notation for tuples in the rest of this paper. A tuple will be represented by a bold letter, e.g. $\vp, \vx, \vx_1$. The components of a tuple will be represented by non-bold letters which match the tuple name and are subscripted by non-zero natural numbers. If a letter has two subscripts, we will separate them by commas. So given tuples $\vp, \vx, \vx_1$ of lengths $l$, $m$, and $k$, respectively, our convention will be $\vp = (p_1, p_2, \dots, p_l)$, $\vx = (x_1, x_2, \dots, x_m)$, $\vx_1 = (x_{1, 1}, x_{1, 2}, \dots x_{1, k})$. 
\end{remark}

\begin{df} Let $\tau(\vx_1, \dots, \vx_n)$ be a term in the language of $\A$. We will write $\tau^\A$ to be the term operation on $\A$ obtained by replacing each function symbol with its interpretation in $\A$. The \emph{$n$-term cube} for $\tau^\A$ on tuples $\vp_1^0, \vp_1^1, \vp_2^0, \vp_2^1,\dots, \vp_n^0, \vp_n^1$ is the $2^n$ tuple 
\[
C_{\tau^\A}^n(\vp_1^0, \vp_1^1; \vp_2^0, \vp_2^1;\dots; \vp_n^0, \vp_n^1) = (r_1, r_2, r_3, \dots, r_{2^n})
\]
with 
\[
r_i = \tau^\A(\vp_1^{i_1}, \vp_2^{i_2}, \dots, \vp_n^{i_n}) \qquad \text{ where } i-1 = \sum_{j=1}^n i_j 2^{j-1}
\]
Note $i_j$ is the $j^\text{th}$ digit of the number $i-1$ written in binary. We call $r_i$ the \emph{$i^\text{th}$ vertex} of the $n$-term cube. We will write $C_{\tau^\A}^n(\vp_i, \vq_i)$ for the term cube where $\vp_i^0 = \vp_i$ and $\vp_i^1 = \vq_i$ for $1 \leq i \leq n$. I.e.
\[
C_{\tau^\A}^n(\vp_i, \vq_i) = C_{\tau^\A}^n(\vp_1, \vq_1; \vp_2, \vq_2;\dots; \vp_n, \vq_n)
\]
We will sometimes write $C_{\tau^\A}^n$ as the $n$-term cube for $\tau^\A$ if the tuples are understood from context.
We will call $C_{\tau^\A}^2(\vp_i, \vq_i)$ the \emph{term square} for $\tau^\A$ on $\vp_1, \vq_1, \vp_2, \vq_2$ and write $S_{\tau^\A}(\vp_i, \vq_i)$. We will display 
\[
S_{\tau^\A}(\vp_i, \vq_i) = (r_1, r_2, r_3, r_4)
\]
pictorially as in Figure \ref{squaredef}.
\begin{figure}[h]
\tsquare{\tau^\A(\vp_1, \vp_2) = r_1}{\tau^\A(\vp_1, \vq_2) = r_2}{r_3 = \tau^\A(\vq_1, \vp_2)}{r_4 = \tau^\A(\vq_1, \vq_2)}
\caption{A pictorial representation of $S_{\tau^\A}(\vp_i, \vq_i)$.}
\label{squaredef}
\end{figure}

We will call $C_{\tau^\A}^3(\vp_i, \vq_i)$ the \emph{term cube} for $\tau^\A$ on $\vp_1, \vq_1, \vp_2, \vq_2, \vp_3, \vq_3$ and write $C_{\tau^\A}(\vp_i, \vq_i)$. We will display 
\[
C_{\tau^\A}(\vp_i, \vq_i) = (r_1, r_2, r_3, r_4, r_5, r_6, r_7, r_8)
\]
pictorially as in Figure \ref{cubedef}.
\begin{figure}[h]
\tcube
{\tau^\A(\vp_1, \vp_2, \vp_3) = r_1}
{\tau^\A(\vp_1, \vp_2, \vq_3) = r_2}
{r_3 = \; \tau^\A(\vp_1, \vq_2, \vp_3)}
{r_4 = \tau^\A(\vp_1, \vq_2, \vq_3)}
{\tau^\A(\vq_1, \vp_2, \vp_3) = r_5}
{\tau^\A(\vq_1, \vp_2, \vq_3) \, = r_6}
{r_7 = \tau^\A(\vq_1, \vq_2, \vp_3)}
{r_8 = \tau^\A(\vq_1, \vq_2, \vq_3)}
\caption{A pictorial representation of $C_{\tau^\A}(\vp_i, \vq_i)$.}
\label{cubedef}
\end{figure}
\end{df}

\begin{df} We say that an algebra $\A$ \emph{fails the $n$-dimensional term condition} if there exists a term in the language of $\A$, $\tau(\vx_1, \dots, \vx_n)$, and tuples $\vp_1, \dots, \vp_n$ and $\vq_1, \dots, \vq_n$ such that 
\[
C_{\tau^\A}^n(\vp_i, \vq_i) = (r_1, r_2, r_3, \dots, r_{2^n})
\]
has
\[
r_{2i-1} = r_{2i} \qquad \text{ for all } 1 \leq i \leq 2^{n-1}-1
\]
and
\[
r_{2^n - 1} \neq r_{2^n}
\]
We say the term $\tau$ above \emph{witnesses} the failure of the $n$-dimensional term condition. An algebra \emph{satisfies} the $n$-dimensional term condition if it does not fail the $n$-dimensional term condition. The 2-dimensional and 3-dimensional term condition may be displayed pictorially as in Figure \ref{23termcondition}. If there is a term operation producing equality along the bold vertical lines and inequality along the dashed vertical, then the algebra fails the $n$-dimensional term condition for $n = 2$ or $3$, respectively. We call the dashed vertical edge the \emph{critical edge} for this reason.
\begin{figure}[h]
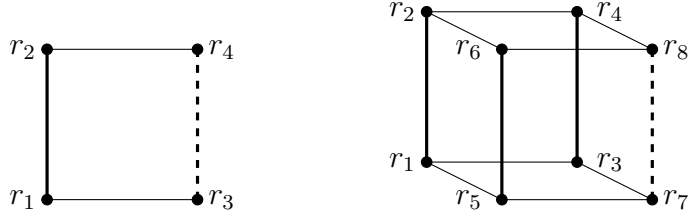

\termsquare{r_1}{r_2}{r_3}{r_4} \hskip1.5cm \termcube{r_1}{r_2}{r_3}{r_4}{r_5}{r_6}{r_7}{r_8}
\caption{Pictorial representations of the 2-term and 3-term conditions.}
\label{23termcondition}
\end{figure}

\end{df}

\begin{remark} The $n$ commutator $[\underbrace{1_\A, 1_\A, \dots, 1_\A}_{n \text{ many}}] = 0_\A$ if and only if $\A$ satisfies the $n$-dimensional term condition.
\end{remark}




\section{The Construction}\label{construction}

Fix $n \geq 2$ for the remainder of this paper. Though not explicit in our notation, the constructed algebra $\A$ will depend on this fixed number $n$. To define the algebra $\A$ we will first need to recursively define an $\omega$-sequence of partial algebras.

\begin{df} Let
\[
B = \{\, a_{i, j} \mid 1 \leq i \leq n \text{ and } j \in \omega \,\} \cup \{\, b_{i, j} \mid 1 \leq i \leq n \text{ and } j \in \omega \,\}
\]
For convenience we set $a_i = a_{i, 0}$ and $b_i = b_{i, 0}$ for $1 \leq i \leq n$ and 
\[
C = \{\, a_1, \dots, a_n, b_1, \dots, b_n\}
\]
We now define the first partial algebra $\A_0$. The universe of $\A_0$ will be 
\[
A_0 = B \cup \{\, d_i \mid 1 \leq i \leq 2^{n-1}+1 \,\} \cup \{\, c \,\}
\]
The language of $\A_0$ will have one $n$-ary function symbol $f_0$. $\A_0$ will interpret this function with domain
\[
\dmn(f_0^{\A_0}) = \{\, (x_1, x_2, \dots, x_n) \mid x_i \in \{\, a_i, b_i \,\} \,\} 
\]
so that the term $f_0(x_1, \dots, x_n)$ has $n$-term cube
\[
C_{f_0^{\A_0}}^n(a_i, b_i) = (d_1, d_1, d_2, d_2, \dots, d_{2^{n-1}-1}, d_{2^{n-1}-1}, d_{2^{n-1}}, d_{2^{n-1}+1})
\]
More precisely, $f_0^{\A_0}$ is defined as follows. 
\begin{align*} 
f_0^{\A_0}(a_1, a_2, a_3, \dots, a_{n-2}, a_{n-1}, a_n) &= d_1 \\ 
f_0(^{\A_0}a_1, a_2, a_3, \dots, a_{n-2}, a_{n-1}, b_n) &= d_1 \\ 
f_0^{\A_0}(a_1, a_2, a_3, \dots, a_{n-2}, b_{n-1}, a_n) &= d_2 \\ 
f_0^{\A_0}(a_1, a_2, a_3, \dots, a_{n-2}, b_{n-1}, b_n) &= d_2 \\ 
f_0^{\A_0}(a_1, a_2, a_3, \dots, b_{n-2}, a_{n-1}, a_n) &= d_3 \\ 
f_0^{\A_0}(a_1, a_2, a_3, \dots, b_{n-2}, a_{n-1}, b_n) &= d_3 \\ 
& \; \vdots \\
f_0^{\A_0}(b_1, b_2, b_3, \dots, b_{n-2}, a_{n-1}, a_n) &= d_{2^{n-1}-1} \\ 
f_0^{\A_0}(b_1, b_2, b_3, \dots, b_{n-2}, a_{n-1}, b_n) &= d_{2^{n-1}-1} \\ 
f_0^{\A_0}(b_1, b_2, b_3, \dots, b_{n-2}, b_{n-1}, a_n) &= d_{2^{n-1}} \\ 
f_0^{\A_0}(b_1, b_2, b_3, \dots, b_{n-2}, b_{n-1}, b_n) &= d_{2^{n-1}+1} \\ 
\end{align*}
Next we define $\A_{i+1}$ for $i \in \omega$. The universe of $\A_{i+1}$ will be
\[
A_{i+1} = A_i \cup (( A_i^n \setminus \dmn(f_i) ) \times \{\, i \,\} )
\]
The language of $\A_{i+1}$ will have one $n$-ary function symbol $f_{i+1}$. $\A_{i+1}$ will interpret this function with domain
\[
\dmn(f_{i+1}^{\A_{i+1}}) = A_i^n
\]
as
\[
f_{i+1}^{\A_{i+1}}(\vx) = 
\begin{cases}
f_i^{\A_{i}}(\vx) & \text{ for } \vx \in \dmn(f_i^{\A_{i}}) \\
(\vx, i) & \text{ otherwise }
\end{cases}
\]

We are now ready to define our desired algebra $\A$. The universe of $\A$ will be $A = \bigcup_{i \in \omega} A_i$. The language of $\A$ will have the following set of function symbols: 
\[
\text{Fcn} = \{\, f, u \,\} \cup \{\, u_{pqr} \mid (p, q, r) \in (A \setminus B)^3 \text{ and $p, q, r$ are pairwise distinct} \,\}
\]
$f$ will have arity $n$ and the rest of the function symbols will be unary. We will set $f^\A = \bigcup_{i \in \omega} f_i^{\A_i}$. For proper triples $(p, q, r)$ we will set
\[
u_{pqr}^\A(x) =
\begin{cases}
q & \text{ if } x = p \\
r & \text{ if } x = q \\ 
p & \text{ if } x = r \\
a_{i, j+1} & \text{ if } x = a_{i, j} \\
b_{i, j+1} & \text{ if } x = b_{i, j} \\
x & \text{ otherwise }
\end{cases}
\]
Finally, $\A$ will interpret $u$ as a permutation, written in cycle notation as 
\[
u^\A = (\, a_1 \, b_1 \, a_2 \, b_2 \, \dots \, a_n \, b_n \, c \,)
\]
Note that $f^\A$ is well defined since each $f_{i+1}^{\A_{i+1}}$ is an extension of $f_i^{\A_i}$. Further note that the construction of $\A$ results in a complete algebra.
\end{df}

\begin{lm} \label{nfequal}
If $f^\A(\vp) = f^\A(\vq)$ and $\vp \neq \vq$, then $\vp, \vq \in \dmn(f_0^{\A_0})$.
\end{lm}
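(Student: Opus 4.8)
\section*{Proof proposal}

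The plan is to compute the value $f^\A(\vx)$ explicitly according to where the tuple $\vx$ first appears in the tower $A_0 \subseteq A_1 \subseteq A_2 \subseteq \cdots$. First I would note that $f^\A$ is total on $A^n$: every component of a tuple $\vx \in A^n$ lies in some $A_i$, so $\vx \in A_i^n = \dmn(f_{i+1}^{\A_{i+1}})$ once $i$ is large enough, and then $f^\A(\vx) = f_{i+1}^{\A_{i+1}}(\vx)$ since $f^\A$ is the union of the $f_{i+1}^{\A_{i+1}}$. Now fix $\vx \in A^n \setminus \dmn(f_0^{\A_0})$ and let $i$ be least with $\vx \in A_i^n$. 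If $i = 0$ then $\vx \in A_0^n \setminus \dmn(f_0^{\A_0})$, so $f_1^{\A_1}(\vx) = (\vx, 0)$; if $i \geq 1$ then $\vx \notin A_{i-1}^n = \dmn(f_i^{\A_i})$, so $f_{i+1}^{\A_{i+1}}(\vx) = (\vx, i)$. In both cases $f^\A(\vx) = (\vx, i)$, which lies in $A_{i+1} \setminus A_i$; in particular it lies outside $A_0$ (as $A_0 \subseteq A_i$), and its first coordinate recovers $\vx$. By contrast, for $\vx \in \dmn(f_0^{\A_0})$ we have $f^\A(\vx) = f_0^{\A_0}(\vx) \in \{\, d_1, \dots, d_{2^{n-1}+1} \,\} \subseteq A_0$.

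With this structural fact in hand the lemma is a short case analysis; I would argue the contrapositive, assuming $f^\A(\vp) = f^\A(\vq)$ and showing that if one of $\vp, \vq$ fails to lie in $\dmn(f_0^{\A_0})$ then $\vp = \vq$. If exactly one of $\vp, \vq$ lies in $\dmn(f_0^{\A_0})$, say $\vp$ does and $\vq$ does not, then $f^\A(\vp) \in A_0$ while $f^\A(\vq) = (\vq, j) \notin A_0$ by the previous paragraph, contradicting $f^\A(\vp) = f^\A(\vq)$; so this case is impossible. If neither lies in $\dmn(f_0^{\A_0})$, then $f^\A(\vp) = (\vp, i)$ and $f^\A(\vq) = (\vq, j)$ for suitable $i, j$, and equality of these ordered pairs forces $\vp = \vq$. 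Hence whenever $f^\A(\vp) = f^\A(\vq)$ with $\vp \neq \vq$, both $\vp$ and $\vq$ must lie in $\dmn(f_0^{\A_0})$, which is the claim.

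The one point that must be read off the construction rather than derived — and the closest thing to an obstacle — is that the adjoined pairs really behave as fresh elements: each $(\vx, i) \in (A_i^n \setminus \dmn(f_i^{\A_i})) \times \{i\}$ is distinct from every element of $A_i$ (so that indeed $(\vx, i) \in A_{i+1} \setminus A_i$), and in particular is never one of $d_1, \dots, d_{2^{n-1}+1}, c$ nor an element of $B$. This is exactly the intended meaning of the recursive clause $A_{i+1} = A_i \cup \big((A_i^n \setminus \dmn(f_i)) \times \{\, i \,\}\big)$, and it is what underlies the assertion in the construction that $\A$ is a complete algebra. Granting it, the argument above is complete and involves no further computation.
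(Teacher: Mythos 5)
Your proposal is correct and follows essentially the same route as the paper: both arguments locate the first stage $i$ at which a tuple enters the domain of some $f_j^{\A_j}$, observe that outside $\dmn(f_0^{\A_0})$ the value is the tagged pair $(\vx,i)$ (hence distinct from every $d_l$ and from any $(\vy,j)$ with $\vy\neq\vx$), and conclude that a collision forces both tuples into $\dmn(f_0^{\A_0})$. Your write-up is a bit more explicit about the freshness of the adjoined pairs, which the paper takes for granted, but the substance is identical.
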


\begin{proof}
Let $k$ be the smallest index such that $\vp \in \dmn(f_k^{\A_k})$. Suppose for contradiction $k \neq 0$. Then 
\[
f^\A(\vp) = f_k^{\A_k}(\vp) = (\vp, k-1)
\] 
Now either there exists some $i$ with $f^\A(\vq) = d_i$ or there exists some $j$ with $f^\A(\vq) = (\vq, l)$.
But for all $i$ and for all $j$
\[
(\vp, k-1) \neq d_i \qquad \text{ and } \qquad (\vp, k-1) \neq (\vq, j)
\]
Thus $f^\A(\vp) \neq f^\A(\vq)$, a contradiction. So $k = 0$. Thus $\vp \in \dmn(f_0^{\A_0})$. Switching the roles of $\vp$ and $\vq$ above, we get $\vq \in \dmn(f_0^{\A_0})$ as well. 
\end{proof}

\begin{remark}\label{nproperties}
We list some useful observations about $\A$ here.
	\begin{enumerate}
	\item \label{nuinjective} All of the unary fundamental operations of $\A$ are injective.
	\item \label{nfinjective} $f^\A$ is injective on $A^n \setminus \dmn(f_0^{\A_0})$.
	\item \label{nfrange} The range of $f^\A$ does not intersect $B$.
	\item \label{nurange} $u^\A$ is the only fundamental operation with any element of $C$ in its range.
	\end{enumerate}
\end{remark}




\section{The Theorem}\label{thetheorem}

In this section we prove properties about the algebra constructed in Section \ref{construction} and summarize our main results a theorem.

\begin{prp}\label{nsimple}
$\A$ is simple.
\end{prp}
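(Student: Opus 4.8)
The plan is to show that any congruence $\theta$ on $\A$ with at least one nontrivial pair collapses everything, so $\theta = 1_\A$; hence the only congruences are $0_\A$ and $1_\A$. The key mechanism is the family of unary operations $u$ and $u_{pqr}$, which are designed to be ``highly transitive'' on $A$ while staying injective (Remark \ref{nproperties}\eqref{nuinjective}). Concretely, I would first record what the unary term operations can do: the $3$-cycles $u_{pqr}^\A = (p\;q\;r)$ for pairwise distinct $p,q,r \in A\setminus B$ generate the full alternating group on the (infinite) set $A \setminus B$ in the sense that for any two pairs of distinct elements outside $B$ one can be moved to the other; and $u^\A$, being the single cycle $(a_1\,b_1\,\cdots\,a_n\,b_n\,c)$, links the elements of $C$ to each other. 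The ``shift'' clauses $a_{i,j}\mapsto a_{i,j+1}$, $b_{i,j}\mapsto b_{i,j+1}$ inside each $u_{pqr}^\A$ let us push the generators $a_{i,j}, b_{i,j}$ of $B$ forward, and $f^\A$ together with Remark \ref{nproperties}\eqref{nfrange} lets us leave $B$: e.g.\ $f_0^{\A_0}$ sends tuples built from $C$ into the $d_i$'s, and $f^\A$ on tuples outside $\dmn(f_0^{\A_0})$ produces the ``new'' elements $(\vx,i)$, none of which lie in $B$.

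So the core of the argument is a sequence of ``propagation'' claims for a nontrivial congruence $\theta$ with $(s,t) \in \theta$, $s \neq t$:
\begin{enumerate}
\item First reduce to the case where the related pair lies outside $B$: if $s$ or $t$ is in $B$, apply a suitable term (a shift composed with $f^\A$, using that the range of $f^\A$ misses $B$ and that $f^\A$ is injective off $\dmn(f_0^{\A_0})$) to obtain a new nontrivial $\theta$-pair both of whose coordinates lie in $A \setminus B$.
\item With a nontrivial pair $(p,q) \in \theta$, $p \ne q$, $p,q \notin B$, use the $3$-cycles $u_{pqr}^\A$ to conclude that \emph{every} pair of distinct elements of $A \setminus B$ is in $\theta$: given any $p',q' \in A\setminus B$ distinct, pick an appropriate product of $u_{pqr}$'s carrying $\{p,q\}$ onto $\{p',q'\}$ (here one must check $A \setminus B$ is large enough to have the needed auxiliary elements $r$, which it is, being infinite). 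Hence $\theta \supseteq (A\setminus B)^2$.
\item Now collapse $B$ into $A \setminus B$: for a generator $b_{i,j} \in B$, exhibit a unary term $t(x)$ with $t^\A(b_{i,j}) \notin B$ but $t^\A(e) \ne t^\A(b_{i,j})$ for some $e \notin B$ — for instance build $b_{i,j}$ up (via shifts, but in reverse we instead move forward from $b_{i,0}=b_i$ into $C$? more cleanly: apply $u^\A$ or $f^\A$) to land in $C$ or a $d$-element, where step 2 already forces a collapse; pulling back along the injective unary operation forces $(b_{i,j}, e) \in \theta$ for the right $e$, and combined with step 2 this puts all of $B$ into one $\theta$-class with $A \setminus B$.
\end{enumerate}
Once all of $A$ is a single $\theta$-class we get $\theta = 1_\A$, proving simplicity.

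The main obstacle I expect is the bookkeeping in steps 2 and 3: verifying that the specific 3-cycles and the cycle $u^\A$, together with $f^\A$ and the shift maps, really do generate enough motion to move \emph{any} nontrivial pair to \emph{any} other, without ever getting ``stuck'' — in particular handling the elements of $B$ and of $C$ (which behave specially: $C \subseteq A \setminus B$ but its elements are fixed points of the $u_{pqr}$'s except for the shift clauses, and $u^\A$ is the only operation with $C$ in its range by Remark \ref{nproperties}\eqref{nurange}), and checking that the auxiliary elements $r$ required for the transpositions/3-cycles can always be chosen distinct from the points we care about. The infiniteness of $A \setminus B$ is what makes this go through; the argument is essentially that the unary reducts act with a single orbit (up to the alternating-group parity subtlety, which the availability of 3-cycles on an infinite set resolves) and that $f^\A$ bridges $B$ to that orbit.
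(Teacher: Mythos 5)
Your steps (1) and (2) match the paper's proof: reduce to a nontrivial $\theta$-pair outside $B$ by applying $f^\A$ to constant tuples (using Remark \ref{nproperties}(\ref{nfinjective}) and (\ref{nfrange})), then use the $3$-cycles $u_{pqr}^\A$ to collapse all of $A\setminus B$ into one $\theta$-class (the paper does this with a single application of $u_{pqr}^\A$ to the pair $(p,q)$, yielding $(q,r)\in\theta$ directly, rather than invoking products of $3$-cycles, but the mechanism is the same).

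Step (3) has a genuine gap. You propose to map $b_{i,j}$ out of $B$ by a term $t$, observe that $t^\A(b_{i,j})$ and $t^\A(e)$ are $\theta$-related by step (2), and then conclude $(b_{i,j},e)\in\theta$ by ``pulling back along the injective unary operation.'' Congruences are preserved under \emph{forward} application of term operations, not under preimages: from $(t^\A(x),t^\A(y))\in\theta$ one cannot conclude $(x,y)\in\theta$, even when $t^\A$ is injective (the pullback $\{(x,y): (t^\A(x),t^\A(y))\in\theta\}$ is a congruence \emph{containing} $\theta$, possibly strictly). A related confusion is your claim that $C\subseteq A\setminus B$; in fact $C=\{a_{i,0},b_{i,0}\}\subseteq B$, so the elements of $C$ are precisely where your collapse of $A\setminus B$ gives you nothing for free, and they \emph{are} moved by the shift clauses of every $u_{pqr}^\A$. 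The correct move, which is how the paper finishes, is to push a known pair \emph{forward}: from $(q,c)\in\theta$ with $q\in A\setminus B$ (and $q\notin C\cup\{c\}$), apply powers of $u^\A$, which fix $q$ and cycle $c$ through all of $C$, to get $(q,a_i),(q,b_i)\in\theta$; then apply powers of $u_{p_1p_2p_3}^\A$ with $p_1,p_2,p_3\in A\setminus(B\cup\{q\})$, which fix $q$ and shift $a_i=a_{i,0}\mapsto a_{i,1}\mapsto\cdots$, to reach every $a_{i,j}$ and $b_{i,j}$. With that replacement your outline becomes the paper's proof.
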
	

\begin{proof}

Suppose $\theta \in \con(\A)$ has $(p, q) \in \theta$ with $p \neq q$. We will show that $\theta = 1_\A$. It will suffice to show that for all $r \in A$ we have $(q, r) \in \theta$. 

It will be advantageous to have that $p$ and $q$ are not elements of $B$. Observe that since $\theta$ is a congruence,
\[
(f^\A(p,p, \dots, p), f^\A(q,q, \dots, q)) \in \theta
\]
By Remark \ref{nproperties} (\ref{nfinjective}), 
\[
f^\A(p,p, \dots, p) \neq f^\A(q,q, \dots, q)
\] 
Finally, by Remark \ref{nproperties} (\ref{nfrange}),
\[
f^\A(p,p, \dots, p), f^\A(q,q, \dots, q) \in A \setminus B
\]
If either $p$ or $q$ is in  $B$, replace $p$ by $f^\A(p,p, \dots, p)$ and replace $q$ by $f^\A(q,q, \dots, q)$. We may assume from now on that $p$ and $q$ are both elements of $A \setminus B$.

Let $r \in A \setminus B$. If $r \in \{\, p, q \,\}$ then since $\theta$ is an equivalence relation, $(q, r) \in \theta$. If $r \notin \{\, p, q \,\}$, then $\A$ has a fundamental operation $u^\A_{p q r}$. Since $(p, q) \in \theta$, we have $(q,r) = (u^\A_{pqr}(p), u^\A_{pqr}(q)) \in \theta$. 

To show that elements of $B$ are $\theta$ related to $q$, first note that $c \in A \setminus B$, so we have that $(q, c) \in \theta$ by the previous paragraph. Thus for all $k \in \omega$ we have $((u^\A)^k(q), (u^\A)^k(c)) \in \theta$. This gives us that $(q, a_i)$ and $(q, b_i)$ are in $\theta$ for all $1 \leq i \leq n$. We are left to show that $(q, a_{i, j})$ and $(q, b_{i, j})$ are in $\theta$ for all $1 \leq i \leq n$ and all $j \geq 1$. Let $p_1, p_2, p_3$  be pairwise distinct elements of $A \setminus (B \cup \{\, q \,\})$. Then for all $j \geq 1$, $(q, a_{i, j}) = ((u^\A_{p_1 p_2 p_3})^j(q), (u^\A_{p_1 p_2 p_3})^j(a_i)) \in \theta$. Similarly for all $j \geq 1$, $(q, b_{i, j})$ is in $\theta$.

We have thus shown for all $r \in A$ that $(q, r) \in \theta$, as desired.
\end{proof}

\begin{prp}\label{nnotnilpotent}
$[\underbrace{1_\A, 1_\A, \dots, 1_\A}_{n \text{ many}}] = 1_\A$
\end{prp}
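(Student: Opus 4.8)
The plan is to produce a term of $\A$ together with tuples that witness the failure of the $n$-dimensional term condition, and then read off the conclusion. By the remark at the end of Section~\ref{preliminaries}, the congruence $[1_\A,\dots,1_\A]$ ($n$ occurrences) equals $0_\A$ exactly when $\A$ satisfies the $n$-dimensional term condition, and by Proposition~\ref{nsimple} this congruence is either $0_\A$ or $1_\A$. Hence it suffices to exhibit a witness to the failure of the $n$-dimensional term condition, which forces the commutator to be $1_\A$.

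For the witness I would take the term $f(x_1,\dots,x_n)$ --- a single application of the $n$-ary symbol $f$ to $n$ distinct variables, each $\vx_i$ a length-one tuple --- and the length-one tuples $\vp_i=a_i$ and $\vq_i=b_i$ for $1\le i\le n$. Every vertex of the resulting $n$-term cube $C_{f^\A}^n(a_i,b_i)$ has the form $f^\A(y_1,\dots,y_n)$ with $y_j\in\{a_j,b_j\}$, i.e.\ the argument tuple lies in $\dmn(f_0^{\A_0})=\{(x_1,\dots,x_n)\mid x_i\in\{a_i,b_i\}\}$. Since $f^\A=\bigcup_{i\in\omega}f_i^{\A_i}$ and each $f_{i+1}^{\A_{i+1}}$ extends $f_i^{\A_i}$, the operation $f^\A$ agrees with $f_0^{\A_0}$ on $\dmn(f_0^{\A_0})$; therefore $C_{f^\A}^n(a_i,b_i)=C_{f_0^{\A_0}}^n(a_i,b_i)$, which by the defining table of $\A_0$ is the $2^n$-tuple $(d_1,d_1,d_2,d_2,\dots,d_{2^{n-1}-1},d_{2^{n-1}-1},d_{2^{n-1}},d_{2^{n-1}+1})$.

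Writing this cube as $(r_1,\dots,r_{2^n})$, we have $r_{2i-1}=r_{2i}=d_i$ for all $1\le i\le 2^{n-1}-1$, whereas $r_{2^n-1}=d_{2^{n-1}}$ and $r_{2^n}=d_{2^{n-1}+1}$; and $d_{2^{n-1}}\ne d_{2^{n-1}+1}$ because the elements $d_1,\dots,d_{2^{n-1}+1}$ are, by the definition of $A_0$, pairwise distinct. Thus this data exhibits the failure of the $n$-dimensional term condition, so $[1_\A,\dots,1_\A]\ne 0_\A$, and by simplicity $[1_\A,\dots,1_\A]=1_\A$.

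I do not expect a genuine obstacle: the partial algebra $\A_0$ was built precisely so that $f_0$ realizes the ``all vertical edges equal except the critical one'' pattern of the $n$-dimensional term condition, and $f$ inherits this since it extends $f_0$. The only care needed is the routine bookkeeping above --- that every vertex of the cube lies in $\dmn(f_0^{\A_0})$, so that $f^\A$ and $f_0^{\A_0}$ agree there, and that the $d_i$ are distinct --- together with making sure the bit-ordering convention used to index the $n$-term cube matches the order in which $f_0^{\A_0}$ is tabulated; if it does not, one simply permutes the variables of $f$ (equivalently, relabels the $\vp_i,\vq_i$), which changes nothing.
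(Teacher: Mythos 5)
Your proof is correct and is essentially identical to the paper's: both exhibit the term $f(x_1,\dots,x_n)$ with $\vp_i=a_i$, $\vq_i=b_i$ as a witness to the failure of the $n$-dimensional term condition and then invoke simplicity. Your extra bookkeeping (that $f^\A$ restricts to $f_0^{\A_0}$, that the $d_i$ are distinct, and the remark about the bit-ordering of the cube versus the tabulation of $f_0^{\A_0}$) is sound and only makes explicit what the paper leaves implicit.
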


We prove this proposition first for the case where when $n = 2$. We do this so that the reader may more easily understand the core of the argument and have pictures for reference. The proof for arbitrary $n$ follows a very similar structure to the $n=2$ proof, and in fact works when $n=2$.

\begin{proof}[Proof for $n=2$]
Consider the term $f(x, y)$. Observe in Figure \ref{111} that the term square $S_{f^\A}(a_i, b_i)$ has an inequality on its critical edge. So $[1_\A, 1_\A] \neq 0_\A$. Since $\A$ is simple, $[1_\A, 1_\A] = 1_\A$ as desired.

\begin{figure}[h]
\termsquare{f^\A(a_1,a_2) = d_1}{f^\A(a_1,b_2) = d_1}{d_2 =  f^\A(b_1,a_2)}{d_3 = f^\A(b_1, b_2)}
\caption{$S_{f^\A}(a_i, b_i)$.}
\label{111}
\end{figure}
\end{proof}

\begin{proof}
For the term $f(x_1, \dots, x_n)$ we have
\[
C_{f^\A}^n(a_i, b_i) = (d_1, d_1, d_2, d_2, \dots, d_{2^{n-1}-1}, d_{2^{n-1}-1}, d_{2^{n-1}}, d_{2^{n-1}+1})
\]
Note that
\[
d_{2^{n-1}} \neq d_{2^{n-1}+1}
\]
So $f$ fails the $n$-term condition. Thus 
\[
[\underbrace{1_\A, 1_\A, \dots, 1_\A}_{n \text{ many}}] \neq 0_\A
\] 
Since $\A$ is simple, we must have 
\[
[\underbrace{1_\A, 1_\A, \dots, 1_\A}_{n \text{ many}}] = 1_\A
\]
\end{proof}

\begin{lm}\label{termnotequal}
Let $\tau(\vx)$ be a term in the language of $\A$. If $\tau^\A(\vp) \neq \tau^\A(\vq)$ and $\tau^\A(\vp), \tau^\A(\vq) \in C$, then there is some $i$ and $m \in \omega$ such that $\tau^\A(\vx) = (u^\A)^m(x_i)$. (Note we consider $(u^\A)^0(x) = x$.)
\end{lm}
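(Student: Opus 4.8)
The plan is to induct on the structure of the term $\tau$, tracking what elements of $C$ can appear in the range of $\tau^\A$ and under what circumstances two distinct such values can be produced. The base case, $\tau(\vx) = x_i$ a variable, is immediate: here $\tau^\A(\vx) = x_i = (u^\A)^0(x_i)$, so the conclusion holds with $m = 0$. For the inductive step, write $\tau = g(\sigma_1, \dots, \sigma_k)$ where $g$ is a fundamental operation. If $g$ is $f$ (arity $n$), then by Remark \ref{nproperties} (\ref{nfrange}) the range of $f^\A$ misses $B$, and in fact $f^\A$ never outputs an element of $C$ either — the vertices of the $0$-th term cube are the $d_i$'s, and every other output is of the form $(\vx, i)$; none of these lie in $C \subseteq B$. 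So $\tau^\A(\vp), \tau^\A(\vq) \in C$ is impossible, and this case is vacuous. The same reasoning eliminates $g = u_{pqr}$: its range is a subset of $(A \setminus B) \cup \{a_{i,j+1}, b_{i,j+1} : \ldots\}$, and by Remark \ref{nproperties} (\ref{nurange}) none of the $u_{pqr}^\A$ have any element of $C$ in their range. Hence the only surviving case is $g = u$, i.e. $\tau = u(\sigma)$ for a single subterm $\sigma$.

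So suppose $\tau^\A(\vx) = u^\A(\sigma^\A(\vx))$ with $\tau^\A(\vp) \neq \tau^\A(\vq)$ both in $C$. Since $u^\A$ is injective (Remark \ref{nproperties} (\ref{nuinjective})), $\sigma^\A(\vp) \neq \sigma^\A(\vq)$. Now I want to argue that $\sigma^\A(\vp), \sigma^\A(\vq) \in C$ as well, so that the inductive hypothesis applies to $\sigma$, yielding $\sigma^\A(\vx) = (u^\A)^{m'}(x_i)$ for some $i$ and $m'$, and therefore $\tau^\A(\vx) = (u^\A)^{m'+1}(x_i)$, completing the induction. The key point is the structure of the permutation $u^\A = (\,a_1\,b_1\,a_2\,b_2\,\cdots\,a_n\,b_n\,c\,)$: its domain is exactly $C \cup \{c\} = C$ (since $c = \ldots$, wait — $c \notin C$ by definition, so $u^\A$ permutes the $2n+1$ element set $C \cup \{c\}$) and fixes everything else. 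Thus $u^\A(y) \in C$ forces $y \in C \cup \{c\}$. If $\sigma^\A(\vp) = c$, then $\tau^\A(\vp) = u^\A(c) = a_1 \in C$, which is fine, but now I need to also handle $c$; the cleaner move is to strengthen the induction hypothesis to allow the output to lie in $C \cup \{c\}$, not just $C$. With that strengthening, $u^\A(y) \in C \cup \{c\}$ forces $y \in C \cup \{c\}$ (as $u^\A$ permutes this set), the inductive hypothesis gives $\sigma^\A(\vx) = (u^\A)^{m'}(x_i)$, and we conclude. One must also recheck the $g = f$ and $g = u_{pqr}$ cases under the strengthened hypothesis: $f^\A$ still never outputs $c$ (its outputs are $d_i$'s and pairs $(\vx,i)$), and $c \notin C$, so $c$ is not in the range of $u_{pqr}^\A$ either (that range meets $C \cup \{c\}$ only possibly at... one should verify $u_{pqr}^\A$ never outputs $c$ — indeed $c \notin B$ and $c$ is not among $p, q, r$ since those are forbidden from lying in... actually $c \in A \setminus B$, so $c$ could be one of $p,q,r$; but then $u_{pqr}^\A$ maps one of $p,q,r$ to another of them, and could output $c$ only if $c \in \{p,q,r\}$, which is allowed — so care is needed). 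I expect this last point about $u_{pqr}$ to be the main obstacle: one has to confirm that even though $c$ may serve as one of the three cycled points, the only way $u_{pqr}^\A$ outputs an element of $C \cup \{c\}$ at two distinct inputs is ruled out, or else fold that possibility into the argument. A clean resolution is to note that on $C \cup \{c\}$ the operation $u_{pqr}^\A$ acts as: it moves the $a_i, b_i$ off to $a_{i,1}, b_{i,1} \notin C \cup \{c\}$, and does a $3$-cycle among $\{p,q,r\}$; so its restriction to $C \cup \{c\}$ has range meeting $C \cup \{c\}$ in at most the single element of $\{p,q,r\}$ that happens to equal $c$ (if any), hence cannot produce two distinct values in $C \cup \{c\}$ — so this case stays vacuous, and the induction goes through.

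In summary: strengthen the statement to "$\tau^\A(\vp), \tau^\A(\vq) \in C \cup \{c\}$ implies $\tau^\A(\vx) = (u^\A)^m(x_i)$ for some $i, m$," prove it by induction on term complexity, dispatch the $f$ and $u_{pqr}$ cases as vacuous using Remark \ref{nproperties}, and handle the $u$ case by injectivity of $u^\A$ together with the fact that $u^\A$ permutes $C \cup \{c\}$ and fixes everything outside it. The original statement follows a fortiori. The main delicate point is bookkeeping which elements lie in the ranges of the various fundamental operations, for which Remark \ref{nproperties} does most of the work.
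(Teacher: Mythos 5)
Your proof is correct in substance, and it takes a slightly different route from the paper's. The paper argues by minimal counterexample \emph{without} strengthening the hypothesis: it keeps the conclusion tied to the set $C$, so when a preimage under $u^\A$ lands on $c$ it must peel off two more layers of $u$ by hand (using that $(u^\A)^{-1}(c)=b_n$ and $(u^\A)^{-1}(b_n)=a_n$ both lie in $C$) before it can invoke minimality and reach a contradiction. Your strengthening of the inductive statement from ``values in $C$'' to ``values in $C\cup\{c\}$'' is exactly what makes that three-step descent unnecessary: since $u^\A$ permutes $C\cup\{c\}$ and fixes its complement pointwise, two distinct values of $\tau^\A$ in $C\cup\{c\}$ pull back under $u^\A$ to two distinct values of $\sigma^\A$ in $C\cup\{c\}$ in a single step, and the induction closes immediately. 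That is a genuine simplification.

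One detail in your handling of the $u_{pqr}$ case needs repair. You reason about ``the restriction of $u_{pqr}^\A$ to $C\cup\{c\}$,'' but the inputs $\sigma^\A(\vp),\sigma^\A(\vq)$ to the outer operation are arbitrary elements of $A$, so restricting the domain is not the relevant move; what you need is a statement about the full range of $u_{pqr}^\A$. Also, $c$ \emph{is} always in that range: if $c\notin\{p,q,r\}$ then $u_{pqr}^\A(c)=c$ (the ``otherwise'' clause), and if $c\in\{p,q,r\}$ it is hit by the $3$-cycle. The correct and sufficient observation is that the range of $u_{pqr}^\A$ meets $C\cup\{c\}$ in at most the single element $c$ --- no element of $C$ is ever an output, since the $B$-elements are all pushed to subscript $j+1\ge 1$, the cycled elements $p,q,r$ lie in $A\setminus B$, and every fixed point lies outside $B$. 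A one-element intersection still cannot supply two distinct values in $C\cup\{c\}$, so the case remains vacuous and your induction goes through. With that correction the argument is complete.
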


\begin{proof}
Suppose there is a counterexample to the claim. Let $\tau$ be the shortest such counterexample. I.e. let $\tau$ be a $k$-ary term such that $\tau^\A$ is not a power of $u^\A$ and there are fixed $\vp$ and $\vq$ with $\tau^\A(\vp) \neq \tau^\A(\vq)$ and $\tau^\A(\vp), \tau^\A(\vq) \in C$. 

Note that we know $\tau^\A(\vx) \neq x_i$ for $1 \leq i \leq k$ by assumption. We consider the outer operation of $\tau$. By Remark \ref{nproperties} (\ref{nurange}) we must have some term $\sigma_1$ such that $\tau(\vx) = u(\sigma_1(\vx))$. 

Recall that $u^\A$ is the permutation
\[
u^\A = (\, a_1 \, b_1 \, a_2 \, b_2 \, \dots \, a_n \, b_n \, c \,)
\]
So $u^\A(\sigma^\A_1(\vp)) \neq u^\A(\sigma^\A_2(\vq))$ gives $\sigma_1^\A(\vp) \neq \sigma_1^\A(\vq)$ and $\tau^\A(\vp), \tau^\A(\vq) \in C$ gives $\sigma_1^\A(\vp), \sigma_1^\A(\vq) \in C \cup \{\, c \,\}$. Since $\tau^\A$ is not a power of $u^\A$, neither is $\sigma_1^\A$. Since $\tau$ is the shortest counterexample, we cannot have $\sigma_1^\A(\vp), \sigma_1^\A(\vq) \in C$. Thus we must have one of $\sigma_1^\A(\vp)$ or $\sigma_1^\A(\vq)$ equal to $c$. Relabel $\vp$ and $\vq$ so that $\sigma_1^\A(\vp) = c$. Then $\sigma_1^\A(\vq) \in C$. By Remark \ref{nproperties} (\ref{nurange}) again we must have some term $\sigma_2$ such that $\sigma_1(\vx) = u(\sigma_2(\vx))$. 

Since $\sigma_1^\A(\vp) = c$ we have $\sigma_2^\A(\vp) = b_n$. Since $\sigma_1^\A(\vq) \in C$ we have $\sigma_2^\A(\vq) \in C \cup \{\, c \,\}$. Note that $\sigma_2^\A$ is not a power of $u^\A$ since $\sigma_1^\A$ is not. Again, since $\tau$ is the shortest counterexample we must have $\sigma_2^\A(\vq) = c$. By Remark \ref{nproperties} (\ref{nurange}) once more, since $\sigma_2^\A(\vp) \in C$, there must be some term $\sigma_3$ such that $\sigma_2(\vx) = u(\sigma_3(\vx))$. 

Noting the definition of $u^\A$ again, we observe that $\sigma_3^\A(\vp) = a_n$ and $\sigma_3^\A(\vq) = b_n$. These are both elements of $C$, but $\sigma_3^\A$ is not a power of $u^\A$ since $\sigma_2^\A$ isn't, so we have a shorter counterexample to the statement than $\tau$. This is a contradiction, so no counterexample exists.
\end{proof}

\begin{lm}\label{cornerequal}
Suppose $C^n_{\tau^\A}(\vp_i, \vq_i) = (r_1, r_2, \dots, r_{2^n})$ has its first vertex $r_1$ equal to all of its adjacent vertices. I.e. suppose that 
\[
r_1 = r_2 = r_3 = r_5 = r_9 = \dots = r_{2^{n-1}+1}
\]
Then the cube is constant, that is 
\[
C^n_{\tau^\A}(\vp_i, \vq_i) = (r_1, r_1, \dots, r_1)
\]
\end{lm}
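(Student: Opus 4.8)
The plan is to induct on the number of function symbols of $\tau$. Fix the tuples $\vp_1,\dots,\vp_n,\vq_1,\dots,\vq_n$ and, as in the definition of the term cube, write $\vp_k^0=\vp_k$ and $\vp_k^1=\vq_k$; for $S\subseteq\{1,\dots,n\}$ set $\tau[S]:=\tau^\A(\vp_1^{s_1},\dots,\vp_n^{s_n})$ where $s_k=1$ if $k\in S$ and $s_k=0$ otherwise. Then $r_1=\tau[\emptyset]$, the hypothesis says $\tau[\emptyset]=\tau[\{j\}]$ for $1\le j\le n$, and the conclusion is $\tau[S]=r_1$ for all $S$. For a subterm $\sigma$ of $\tau$ we define $\sigma[S]$ with the same tuples; since $\sigma$ has fewer function symbols than $\tau$, the inductive hypothesis applies to it: if $\sigma[\emptyset]=\sigma[\{j\}]$ for all $j$ then $\sigma[S]=\sigma[\emptyset]$ for all $S$. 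If $\tau$ is a single variable, lying in the block $\vx_k$, then $\tau[S]$ depends only on whether $k\in S$, so $\tau[\emptyset]=\tau[\{k\}]$ already forces constancy. If the outermost symbol of $\tau$ is unary, $\tau=g(\sigma)$, then $g^\A$ is injective by Remark~\ref{nproperties}\,(\ref{nuinjective}); from $g^\A(\sigma[\emptyset])=\tau[\emptyset]=\tau[\{j\}]=g^\A(\sigma[\{j\}])$ we get $\sigma[\emptyset]=\sigma[\{j\}]$ for all $j$, hence $\sigma[S]=\sigma[\emptyset]$ and $\tau[S]=g^\A(\sigma[S])=r_1$ for all $S$.

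The substantive case is $\tau=f(\sigma_1,\dots,\sigma_n)$; write $\vec\sigma[S]:=(\sigma_1[S],\dots,\sigma_n[S])$, so $\tau[S]=f^\A(\vec\sigma[S])$. If $\vec\sigma[\emptyset]=\vec\sigma[\{j\}]$ for every $j$, then $\sigma_m[\emptyset]=\sigma_m[\{j\}]$ for all $m,j$, so by induction each $\sigma_m[S]$, hence $\vec\sigma[S]$ and $\tau[S]$, is independent of $S$. Otherwise $\vec\sigma[\emptyset]\ne\vec\sigma[\{j_0\}]$ for some $j_0$; since $f^\A(\vec\sigma[\emptyset])=r_1=f^\A(\vec\sigma[\{j\}])$ for every $j$, Lemma~\ref{nfequal} places $\vec\sigma[\emptyset]$ and each $\vec\sigma[\{j\}]$ in $\dmn(f_0^{\A_0})=\{(x_1,\dots,x_n)\mid x_m\in\{a_m,b_m\}\}$, on which $f^\A$ agrees with $f_0^{\A_0}$. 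In particular $\sigma_m[\emptyset]$ and each $\sigma_m[\{j\}]$ lie in $\{a_m,b_m\}\subseteq C$. Call $m$ \emph{active} if $\sigma_m[\emptyset],\sigma_m[\{1\}],\dots,\sigma_m[\{n\}]$ are not all equal. For non-active $m$, induction makes the $\sigma_m$-cube constant, with value in $\{a_m,b_m\}$. For active $m$, $\sigma_m^\A$ takes two distinct values of $C$, so Lemma~\ref{termnotequal} shows $\sigma_m^\A$ is $(u^\A)^{\ell}$ applied to a single variable, lying in one of the blocks, say $\vx_{k(m)}$; then $\sigma_m[S]$ depends only on whether $k(m)\in S$, its two values are $a_m$ and $b_m$, and $\sigma_m[S]=\sigma_m[\emptyset]$ when $k(m)\notin S$ while $\sigma_m[S]$ is the other of $a_m,b_m$ when $k(m)\in S$. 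In all cases $\vec\sigma[S]\in\dmn(f_0^{\A_0})$ for every $S$, so $\tau[S]=f_0^{\A_0}(\vec\sigma[S])$.

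It remains to read off $f_0^{\A_0}$ on $\dmn(f_0^{\A_0})$ from its defining table: the listed values show that $f_0^{\A_0}(x_1,\dots,x_n)$ depends only on $(x_1,\dots,x_{n-1})$, except that $f_0^{\A_0}(b_1,\dots,b_{n-1},a_n)=d_{2^{n-1}}$ and $f_0^{\A_0}(b_1,\dots,b_{n-1},b_n)=d_{2^{n-1}+1}$ are distinct. Consequently, two distinct points of $\dmn(f_0^{\A_0})$ with equal $f_0^{\A_0}$-value must agree in coordinates $1,\dots,n-1$, and their common value there is not $(b_1,\dots,b_{n-1})$. Now, for each active $m$ the points $\vec\sigma[\emptyset]$ and $\vec\sigma[\{k(m)\}]$ differ in coordinate $m$ and have the same $f_0^{\A_0}$-image (indeed $f_0^{\A_0}(\vec\sigma[\{k(m)\}])=\tau[\{k(m)\}]=r_1$ by hypothesis), so the displayed property forces $m=n$; and applying the property to $\vec\sigma[\emptyset]$ and $\vec\sigma[\{j_0\}]$ shows coordinates $1,\dots,n-1$ of $\vec\sigma[\emptyset]$ are not all equal to $b$. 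Hence coordinate $n$ is the only coordinate of $\vec\sigma[S]$ that can vary with $S$, so every $\vec\sigma[S]$ agrees with $\vec\sigma[\emptyset]$ in coordinates $1,\dots,n-1$---a configuration not of the exceptional kind---and therefore $f_0^{\A_0}(\vec\sigma[S])=f_0^{\A_0}(\vec\sigma[\emptyset])=r_1$. Thus the $\tau$-cube is constant.

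I expect the last paragraph to be the main obstacle: one has to distill from the explicit definition of $f_0^{\A_0}$ that, on its domain, it depends only on the first $n-1$ coordinates apart from separating the single pair $(b_1,\dots,b_{n-1},a_n),(b_1,\dots,b_{n-1},b_n)$, and then combine this with Lemmas~\ref{nfequal} and \ref{termnotequal} to exclude any dependence of $\vec\sigma[S]$ on $S$ in the first $n-1$ coordinates. The variable case, the unary case, and the inductive bookkeeping are routine.
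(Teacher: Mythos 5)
Your proof is correct and follows essentially the same route as the paper's: induction on term length, the same three cases (variable, unary outer symbol, $f$ outer symbol), Lemma \ref{nfequal} to land in $\dmn(f_0^{\A_0})$, and Lemma \ref{termnotequal} to force any varying coordinate to be the $n$th. Your final paragraph just spells out explicitly the computation the paper compresses into ``we may now explicitly compute that the cube is constant,'' namely that $f_0^{\A_0}$ depends only on the first $n-1$ coordinates except at $(b_1,\dots,b_{n-1})$.
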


\begin{proof}
We will prove this proposition by contradiction. To that end, suppose $\tau$ is the shortest term in the language of $\A$ such that there exist tuples $\vp_i, \vq_i$ for $1 \leq i \leq n$ such that $C^n_{\tau^\A}(\vp_i, \vq_i) = (r_1, r_2, \dots, r_{2^n})$ has its first vertex equal to all adjacent vertices but $C^n_{\tau^\A}(\vp_i, \vq_i)$ is not constant. 

Suppose that $\tau^\A$ is essentially unary. Then since $C^n_{\tau^\A}(\vp_i, \vq_i)$ is not constant, there is an inequality along an edge. Since $\tau^\A$ is essentially unary, all parallel edges must then also have an inequality. This would mean that one of the vertices adjacent to $r_1$ could not be equal to $r_1$, a contradiction. Thus $\tau^\A$ is not essentially unary. From this we conclude that $\tau$ is not a variable.

Suppose $\tau$ has one of the unary function symbols as its outer operation, that is suppose $\tau = \upsilon (\sigma)$ for some fundamental unary function symbol $\upsilon$ and some term $\sigma$. Then as in Remark \ref{nproperties} (\ref{nuinjective}), $\upsilon^\A$ is injective. Thus we must have that $\sigma$ is a term which is shorter than $\tau$ such that  $C_{\sigma^\A}(\vp_i, \vq_i) = (r_1, r_2, \dots, r_{2^n})$ has its first vertex equal to all adjacent vertices but $C^n_{\sigma^\A}(\vp_i, \vq_i)$ is not constant, contradicting the assumption that $\tau$ is the shortest such term. 

Suppose $\tau$ has outer operation $f$, that is suppose $\tau = f(\sigma_1, \sigma_2, \dots, \sigma_n)$ for terms $\sigma_1, \sigma_2, \dots, \sigma_n$.  Label 
\[
C^n_{\sigma_i^\A} = (s_{i,1}, s_{i,2}, \dots, s_{i,2^n})
\]
Since $C^n_{\tau^\A}$ is not constant, we know that there is some fixed $j$ with $1 \leq j \leq n$ such that $C^n_{\sigma_j^\A}$ is not constant. By the inductive assumption, we know that $C^n_{\sigma_j^\A}$ cannot have its first vertex equal to all adjacent vertices. Thus we may fix $k$ such that $s_{j, k}$ is adjacent to $s_{j, 1}$ and $s_{j, k} \neq s_{j, 1}$. Note that $r_k$ is adjacent to $r_1$ so $r_1 = r_k$. We then have 
\[
f^\A(s_{1, 1}, s_{2, 1}, \dots, s_{n, 1}) = r_1 = r_k = f^\A(s_{1, k}, s_{2, k}, \dots, s_{n, k})
\] 
but
\[
(s_{1, 1}, s_{2, 1}, \dots, s_{n, 1}) \neq (s_{1, k}, s_{2, k}, \dots, s_{n, k})
\]
By Lemma \ref{nfequal} we see that we must have 
\[
(s_{1, 1}, s_{2, 1}, \dots, s_{n, 1}), (s_{1, k}, s_{2, k}, \dots, s_{n, k}) \in \dmn(f_0^{\A_0})
\]
Note that $f_0^{\A_0}(\vx) = f_0^{\A_0}(\vy)$ only when the tuples $\vx$ and $\vy$ agree on their first $n-1$ entries. So we must have $s_{i, 1} = s_{i, k}$ for all $1 \leq i \leq n-1$. So the fact that $s_{j, k} \neq s_{j, 1}$ tells us that $j = n$. Observe that since $(s_{1, 1}, s_{2, 1}, \dots, s_{n, 1}) \in \dmn(f_0^{\A_0})$ there must be some $l$ such that $r_1 = d_l$. We then know that all vertices adjacent to $r_1$ in $C^n_{\tau^\A}$ are also equal to $d_l$. This tells us that for all $1 \leq i \leq n-1$ any vertex adjacent to $s_{i, 1}$ in $C^n_{\sigma_i^\A}$ is in fact equal to $s_{i, 1}$. So by the inductive assumption, we have that $C^n_{\sigma_i^\A}$ is constant for all $1 \leq i \leq n-1$. We consider $C^n_{\sigma_n^\A}$. We know that $s_{n, 1} \neq s_{n, k}$ and since $(s_{1, 1}, s_{2, 1}, \dots, s_{n, 1}), (s_{1, k}, s_{2, k}, \dots, s_{n, k}) \in \dmn(f_0^{\A_0})$ we further have that $s_{n, 1}, s_{n, j} \in C$. From this Lemma \ref{termnotequal} gives us that $\sigma_n^\A = (u^\A)^m$ for some $m$. Important in this fact is that $\sigma_n^\A$ is essentially unary and thus all vertices of $C^n_{\sigma_n^\A}$ are either $s_{n, 1}$ or $s_{n, j}$. But this means that $C^n_{\tau^\A}$ is in fact the constant cube with all vertices $d_l$, a contradiction. 

We have thus found that $\tau$ may not be a variable, and may not have any of the fundamental function symbols as its outer operation, thus no such $\tau$ may exist, as desired. 
\end{proof}

\begin{prp}\label{supernilpotent}
$[\underbrace{1_\A, 1_\A, \dots, 1_\A}_{n+1 \text{ many}}] = 0_\A$
\end{prp}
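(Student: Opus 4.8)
The plan is to prove the equivalent statement (via the Remark expressing the higher commutator through the term condition) that $\A$ satisfies the $(n+1)$-dimensional term condition, by contradiction in the style of Lemma \ref{cornerequal}. Suppose $\tau(\vx_1,\dots,\vx_{n+1})$ is a \emph{shortest} term admitting tuples $\vp_i,\vq_i$ with $C^{n+1}_{\tau^\A}(\vp_i,\vq_i)=(r_1,\dots,r_{2^{n+1}})$ satisfying $r_{2i-1}=r_{2i}$ for $1\le i\le 2^n-1$ but $r_{2^{n+1}-1}\neq r_{2^{n+1}}$. As in Lemma \ref{cornerequal}, $\tau$ cannot be a variable (its projection cube is constant along direction $1$ if the variable lies outside block $\vx_1$, and is constant outright once $r_1=r_2$ if the variable lies in $\vx_1$), and $\tau$ cannot have a unary fundamental operation as outermost operation, since that operation is injective by Remark \ref{nproperties}(\ref{nuinjective}) and its argument would then be a shorter witness. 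Hence $\tau=f(\sigma_1,\dots,\sigma_n)$; write $C^{n+1}_{\sigma_k^\A}(\vp_i,\vq_i)=(s_{k,1},\dots,s_{k,2^{n+1}})$, so that $r_l=f^\A(s_{1,l},\dots,s_{n,l})$.

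For each $j\le 2^n-1$, Lemma \ref{nfequal} applied to $r_{2j-1}=r_{2j}$ gives that either $(s_{1,2j-1},\dots,s_{n,2j-1})=(s_{1,2j},\dots,s_{n,2j})$ or both of these tuples lie in $\dmn(f_0^{\A_0})$. If the first alternative holds for every $j\le 2^n-1$, then each $C^{n+1}_{\sigma_k^\A}$ has its first $2^n-1$ direction-$1$ edges balanced, hence by minimality its last one as well, so $r_{2^{n+1}-1}=r_{2^{n+1}}$, a contradiction. Otherwise the second alternative holds for some $j_0$; since $f_0^{\A_0}(\vx)=f_0^{\A_0}(\vy)$ forces $\vx,\vy$ to agree on their first $n-1$ entries, this yields $s_{n,2j_0-1}\neq s_{n,2j_0}$ with both in $C$, so Lemma \ref{termnotequal} makes $\sigma_n^\A$ a power of $u^\A$ applied to one variable, necessarily from block $\vx_1$ (vertices $2j_0-1,2j_0$ differ only in coordinate $1$). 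Then $s_{n,l}$ depends only on the parity of $l$, taking each of the values $a_n,b_n$, so \emph{every} direction-$1$ edge of the $\tau$-cube alters the $\sigma_n$-coordinate, and Lemma \ref{nfequal} forces \emph{all} tuples along edges $1,\dots,2^n-1$ into $\dmn(f_0^{\A_0})$. Consequently, for $k\le n-1$: $s_{k,l}\in\{a_k,b_k\}$ whenever $l\le 2^{n+1}-2$, and $s_{k,2j-1}=s_{k,2j}$ for $j\le 2^n-1$; so by minimality each $C^{n+1}_{\sigma_k^\A}$ ($k\le n-1$) has all direction-$1$ edges balanced, i.e.\ does not depend on block $\vx_1$, and may be regarded as an $n$-cube $g_k$ over $\vx_2,\dots,\vx_{n+1}$ every vertex of which, except the one at the ``all-$\vq$ corner'', lies in $\{a_k,b_k\}$.

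I would finish by analyzing these $g_k$. If the $2^n-1$ (at least three) off-corner vertices of $g_k$ are not all equal, two of them are distinct members of $C$, so Lemma \ref{termnotequal} makes $\sigma_k^\A$ essentially unary — a power of $u^\A$ of a variable in some block $\vx_{\beta_k}$ with $\beta_k\ge 2$. If instead they are all equal to some $\epsilon_k\in\{a_k,b_k\}$, then since $n\ge 2$ there is a vertex $v$ of $g_k$ distinct from and non-adjacent to the corner; every neighbour of $v$ then misses the corner and so equals $\epsilon_k$, and Lemma \ref{cornerequal} (after relabeling the cube to make $v$ its first vertex) makes $g_k$ constant $\epsilon_k$. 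If every $g_k$ is constant, the ``prefix'' $(s_{1,2^{n+1}-1},\dots,s_{n-1,2^{n+1}-1})=(\epsilon_1,\dots,\epsilon_{n-1})$ equals the prefix at vertex $1$; that vertex lies on a balanced edge whose tuples are in $\dmn(f_0^{\A_0})$, so the prefix is not $(b_1,\dots,b_{n-1})$, whence $f_0^{\A_0}$ is constant along the last $a_n$-versus-$b_n$ edge and $r_{2^{n+1}-1}=r_{2^{n+1}}$, a contradiction. If some $g_k$ is non-constant, then the $\tau$-cube depends on block $\vx_1$ (through $\sigma_n$) and on at most $n-1$ of the blocks $\vx_2,\dots,\vx_{n+1}$, so some block $\vx_{\beta^*}$ with $\beta^*\ge 2$ is not used by the $\tau$-cube at all; flipping coordinate $\beta^*$ pairs each direction-$1$ edge with an identical twin, and the last edge (at the all-$\vq$ corner) is twinned with the one obtained by setting coordinate $\beta^*$ to $0$, which is among the first $2^n-1$ edges and hence balanced — so the last edge is balanced, the final contradiction.

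I expect the heart of the difficulty to be exactly the case $\tau=f(\sigma_1,\dots,\sigma_n)$ after Lemma \ref{nfequal} has driven the relevant tuples into $\dmn(f_0^{\A_0})$: there the last coordinate is pinned by $\sigma_n$ to oscillate between $a_n$ and $b_n$, and one must simultaneously show that the prefix coordinates are either globally constant (which is where Lemma \ref{cornerequal} enters) or that they collapse enough of $\tau$'s blocks to make the last direction-$1$ edge a mirror of an earlier, balanced one. Getting this bookkeeping exactly right — in particular, that a term cube with all but one vertex in a two-element subset of $C$ must be constant or essentially unary — is the delicate step, and it is where Lemmas \ref{nfequal}, \ref{termnotequal}, and \ref{cornerequal} are all genuinely needed.
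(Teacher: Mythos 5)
Your proof is correct, and its skeleton is the same as the paper's: take a shortest witness $\tau$, rule out the variable and unary-headed cases, reduce to $\tau=f(\sigma_1,\dots,\sigma_n)$, and then use Lemma \ref{nfequal} to force the relevant argument tuples into $\dmn(f_0^{\A_0})$, Lemma \ref{termnotequal} to conclude that $\sigma_n^\A$ is a power of $u^\A$ depending on block $\vx_1$, and Lemma \ref{cornerequal} to handle constant prefixes. Where you genuinely differ is the endgame. The paper concludes from $s_{i,2^{n+1}-1}=s_{i,2^{n+1}}\in\{\,a_i,b_i\,\}$ that the cube can be ``explicitly computed'' to be constant; as written this skips the possibility that the prefix $(s_{1,2^{n+1}-1},\dots,s_{n-1,2^{n+1}-1})$ equals $(b_1,\dots,b_{n-1})$, in which case the critical edge would evaluate to $d_{2^{n-1}}$ versus $d_{2^{n-1}+1}$ and remain unbalanced (this can actually happen coordinatewise when some $\sigma_i$ with $i<n$ falls into the essentially unary case, since then the prefix at the critical edge need not agree with the prefix at vertex $1$). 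Your two sub-cases supply exactly the missing argument: when every $g_k$ is constant you carry the prefix back to the balanced edge at vertex $1$, where Lemma \ref{nfequal} and the definition of $f_0^{\A_0}$ already forbid the all-$b$ prefix, and when some $\sigma_k$ is essentially unary in a block $\beta_k\ge 2$ you observe that only $n-1$ of the $n$ blocks $\vx_2,\dots,\vx_{n+1}$ can be used, so the critical edge mirrors an earlier, balanced edge. This buys a fully airtight finish at the cost of some extra bookkeeping; the rest of your write-up tracks the paper's proof step for step.
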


For similar reasons to those in Proposition \ref{nnotnilpotent}, we first prove this proposition when $n=2$, then follow with the proof for the general case.

\begin{proof}[Proof for $n = 2$]
We will prove this proposition by contradiction. To that end, suppose 
\[
[1_\A, 1_\A, 1_\A] \neq 0_\A
\]
Let $\tau(\vx_1, \vx_2, \vx_3)$ be the shortest term witnessing the failure of the 3-dimensional term condition. 

If $\tau^\A$ were essentially unary, then the inequality along the critical edge of $C_{\tau^\A}$ would imply inequality along the three bold vertical edges (see Figure \ref{taufail}) contradicting $\tau$ being a witness to the failure of the 3-dimensional term condition. Thus $\tau^\A$ cannot be essentially unary. So $\tau$ is not a variable.

Suppose there is a unary function symbol $\upsilon$ and term $\sigma$ such that 
\[
\tau(\vx_1, \vx_2, \vx_3) = \upsilon(\sigma(\vx_1, \vx_2, \vx_3))
\]
Noting that $\upsilon^\A$ is injective, as in Remark \ref{nproperties} (\ref{nuinjective}), we see that $\sigma$ will be a shorter term witnessing the failure of the 3-dimensional term condition, a contradiction.

Suppose there are terms $\sigma_1, \sigma_2$ such that 
\[
\tau(\vx_1, \vx_2, \vx_3) = f(\sigma_1(\vx_1, \vx_2, \vx_3), \sigma_2(\vx_1, \vx_2, \vx_3))
\]
Let $C_{\tau^\A}(\vp_i, \vq_i) = (r_1, r_2, \dots, r_8)$ witness the failure of the 3-dimensional term condition, as in Figure \ref{taufail}.
\begin{figure}[h]
\termcube{r_1}{r_2}{r_3}{r_4}{r_5}{r_6}{r_7}{r_8}
\caption{$C_{\tau^\A}(\vp_i, \vq_i)$}
\label{taufail}
\end{figure}

Label the term cubes $C_{\sigma_1^\A}(\vp_i, \vq_i)$ and $C_{\sigma_2^\A}(\vp_i, \vq_i)$ as in Figure \ref{sigma12}.
\begin{figure}[h]
$C_{\sigma_1^\A}$: \tcube{s_1}{s_2}{s_3}{s_4}{s_5}{s_6}{s_7}{s_8}  \hskip1cm $C_{\sigma_2^\A}$: \tcube{t_1}{t_2}{t_3}{t_4}{t_5}{t_6}{t_7}{t_8} 
\caption{$C_{\sigma_1^\A}(\vp_i, \vq_i)$ on the left. $C_{\sigma_2^\A}(\vp_i, \vq_i)$ on the right.} 
\label{sigma12}
\end{figure}
Since 
\[
f(s_7, t_7) = r_7 \neq r_8 = f(s_8, t_8)
\]
we know that
\[
(s_7, t_7) \neq (s_8, t_8)
\]
So $s_7 \neq s_8$ or $t_7 \neq t_8$. Since $\sigma_1$ and $\sigma_2$ are both shorter terms than $\tau$, we know that neither witnesses the failure of the 3-dimensional term condition. Thus there must be a failure in equality along a non-critical vertical edge in $C_{\sigma_1^\A}(\vp_i, \vq_i)$ or $C_{\sigma_2^\A}(\vp_i, \vq_i)$, respectively. I.e. there is some $j$ with $1 \leq j \leq 3$ such that 
\[
s_{2j-1} \neq s_{2j} \text{ or } t_{2j-1} \neq t_{2j}
\]
In either case
\[
(s_{2j-1}, t_{2j-1}) \neq (s_{2j}, t_{2j})
\]
Observe that
\[
f^\A(s_{2j-1}, t_{2j-1}) = r_{2j-1} = r_{2j} = f^\A(s_{2j}, t_{2j})
\]
By Lemma \ref{nfequal} we get that 
\[
(s_{2j-1}, t_{2j-1}), (s_{2j}, t_{2j}) \in \dmn(f_0^{\A_0})
\]
Thus 
\[
s_{2j-1}, s_{2j}, t_{2j-1}, t_{2j} \in C
\]
and, in fact, $s_{2j-1} = s_{2j},$ while $t_{2j-1} \neq t_{2j}$. So $t_{2j-1}$ and $t_{2j}$ are distinct elements of $C$ in the range of $\sigma_2^\A$. Lemma \ref{termnotequal} then tells us that there is some $m \in \omega$ such that $\sigma_2^\A = (u^\A)^{m}$. So $\sigma_2^\A$ is essentially unary. From this we may conclude that 
\[
C_{\sigma_2^\A}(\vp_i, \vq_i) = (t_1, t_2, t_1, t_2, t_1, t_2, t_1, t_2)
\]
with $t_1, t_2$ distinct elements in $\{a_2, b_2\}$. Further, since $r_1 = r_2$, $r_3 = r_4$, and $r_5 = r_6$ we must have 
\[
s_1 = s_2 = s_3 = s_4 = s_5 = s_6 = a_1
\]
Note that both $(s_1, s_3, s_5, s_7)$ and $(s_2, s_4, s_6, s_8)$ are term squares for $\sigma_1$. By Lemma \ref{cornerequal} we get that both term squares are constant. So $s_7 = s_8 = a_1$ as well. We may now explicitly compute that $C_{\tau^\A}(\vp_i, \vq_i)$ is a constant cube. This contradicts our assumptions and we have completed the proof. 
\end{proof}

\begin{proof}
We will prove this proposition by contradiction. To that end, suppose 
\[
[\underbrace{1_\A, 1_\A, \dots, 1_\A}_{n+1 \text{ many}}] \neq 0_\A
\]
Let $\tau(\vx_1, \dots, \vx_{n+1})$ be the shortest term witnessing the failure of the $n+1$-dimensional term condition. 

If $\tau^\A$ were essentially unary, then the inequality along the critical edge of $C^{n+1}_{\tau^\A}$ would imply inequality along all other vertical edges, contradicting $\tau$ being a witness to the failure of the $n+1$-dimensional term condition. Thus $\tau^\A$ cannot be essentially unary. So $\tau$ is not a variable.

Suppose there is a unary function symbol $\upsilon$ and term $\sigma$ such that 
\[
\tau(\vx_1, \dots, \vx_{n+1}) = \upsilon(\sigma(\vx_1, \dots, \vx_{n+1}))
\]
Noting that $\upsilon^\A$ is injective, as in Remark \ref{nproperties} (\ref{nuinjective}), we see that $\sigma$ will be a shorter term witnessing the failure of the $n+1$-dimensional term condition, a contradiction.

Suppose there are terms $\sigma_1, \dots, \sigma_n$ such that 
\[
\tau(\vx_1, \dots, \vx_{n+1}) = f(\sigma_1(\vx_1, \dots, \vx_{n+1}), \dots, \sigma_n(\vx_1, \dots, \vx_{n+1}))
\]
Let
\[
C_{\tau^\A}^{n+1}(\vp_i, \vq_i) = (r_1, r_2, \dots, r_{2^{n+1}})
\]
witness the failure of the $n+1$-dimensional term condition, so that 
\[
r_{2j-1} = r_{2j} \qquad \text{ for all } 1 \leq j \leq 2^{n} - 1
\]
and
\[
r_{2^{n+1}-1} \neq r_{2^{n+1}}
\]
For each $1 \leq i \leq n$, label the $n+1$-term cubes as 
\[
C_{\sigma_i^\A}^{n+1}(\vp_i, \vq_i) = (s_{i, 1}, s_{i, 2}, \dots, s_{i, 2^{n+1}})
\]
Since $r_{2^{n+1}-1} \neq r_{2^{n+1}}$, there must be some fixed $k$ with $1 \leq k \leq n$ such that $s_{k, 2^{n+1}-1} \neq s_{k, 2^{n+1}}$. Since $\sigma_k$ is a shorter term than $\tau$, we know that $\sigma_k$ does not witness the failure of the $n+1$-dimensional term condition. So there must be some fixed $l$ with $1 \leq l \leq 2^n-1$ such that
\[
s_{k, 2l-1} \neq s_{k, 2l} 
\]
Thus 
\[
(s_{1, 2l-1}, s_{2, 2l-1}, \dots, s_{n, 2l-1}) \neq (s_{1, 2l}, s_{2, 2l}, \dots, s_{n, 2l})
\]
Observe that
\[
f^\A(s_{1, 2l-1}, s_{2, 2l-1}, \dots, s_{n, 2l-1}) = r_{2l-1} = r_{2l} = f^\A(s_{1, 2l}, s_{2, 2l}, \dots, s_{n, 2l})
\]
By Lemma \ref{nfequal} we get that 
\[
(s_{1, 2l-1}, s_{2, 2l-1}, \dots, s_{n, 2l-1}), (s_{1, 2l}, s_{2, 2l}, \dots, s_{n, 2l}) \in \dmn(f_0^{\A_0})
\]
Thus for each $1 \leq i \leq n$ we have
\[
s_{i, 2l-1}, s_{i, 2l} \in C
\]
and, in fact, for each $1 \leq i <n$, we have $s_{i, 2l-1} = s_{i, 2l},$ while $s_{n, 2l-1} \neq s_{n, 2l}$. So $s_{n, 2l-1}$ and $s_{n, 2l}$ are distinct elements of $C$ in the range of $\sigma_n^\A$. Recall that $s_{n, 2l-1}$ and $s_{n, 2l}$ are in the range of $\sigma_n^\A$. Lemma \ref{termnotequal} then tells us that there is some $m \in \omega$ such that $\sigma_n^\A = (u^\A)^{m}$. So $\sigma_n^\A$ is essentially unary. From this we may conclude that 
\[
C^{n+1}_{\sigma_n^\A}(\vp_i, \vq_i) = (s_{n, 1}, s_{n, 2}, s_{n, 1}, s_{n, 2}, \dots , s_{n, 1}, s_{n, 2})
\]
with $s_{n, 1}, s_{n, 2}$ distinct elements in $\{a_n, b_n\}$. Since for all $1 \leq j \leq 2^{n} - 1$ we know $r_{2j-1} = r_{2j}$, we must have 
\[
s_{i, 2j-1} = s_{i, 2j} \in \{a_i, b_i\} \qquad \text{ for all } 1 \leq j \leq 2^{n} - 1 \text{ and all } 1 \leq i < n
\]
For each cube $C^{n+1}_{\sigma_i^\A}$ we have two possible cases.

\begin{case} For all $j_1$ and $j_2$ with $1 \leq j_1 \leq j_2 \leq 2^{n+1}-2$ we have $s_{i, j_1} = s_{i, j_2}$. \end{case}

In this case we see that Lemma \ref{cornerequal} applies and in fact $C^{n+1}_{\sigma_i^\A}$ is the constant cube with vertices all $a_i$ or all $b_i$.

\begin{case} There are $j_1$ and $j_2$ with $1 \leq j_1 \leq j_2 \leq 2^{n+1}-2$ such that $s_{i, j_1} \neq s_{i, j_2}$. \end{case}

In this case we have that $\sigma_i^\A$ outputs two distinct elements of $C$, so Lemma \ref{termnotequal} tells us that $\sigma_i^\A$ is actually a power of $u^\A$, so is essentially unary. From this and the fact that $s_{i, 2j-1} = s_{i, 2j} \in \{a_i, b_i\}$ for all $1 \leq j \leq 2^{n} - 1$, we may conclude that $s_{i, 2^{n+1} -1} = s_{i, 2^{n+1}} \in \{a_i, b_i\}$. 

Note that in either case, $s_{i, 2^{n+1} -1} = s_{i, 2^{n+1}} \in \{a_i, b_i\}$. We may now explicitly compute that $C^{n+1}_{\tau^\A}(\vp_i, \vq_i)$ is a constant cube. This contradicts the assumption that we chose $C^{n+1}_{\tau^\A}(\vp_i, \vq_i)$ to be a witness of the failure of the $n+1$-term condition.
\end{proof}

The three propositions of this section immediately entail the following:

\begin{thm}
There exists a simple algebra $\A$ whose descending central series of higher commutators, $(\theta_{m+1})_{m \in \omega}$, has 
\[
\theta_m = 1_\A \text{ for } m \leq n
\]
and 
\[
\theta_m = 0_\A \text{ for } m > n
\]
\end{thm}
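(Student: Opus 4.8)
The plan is simply to assemble the three propositions already established in this section. First recall from Section \ref{intro} that higher commutators satisfy $[\alpha_1, \dots, \alpha_m, \alpha_{m+1}] \leq [\alpha_1, \dots, \alpha_m]$, so with all arguments equal to $1_\A$ we get $\theta_{m+1} \leq \theta_m$ for every $m$; that is, the descending central series $(\theta_m)$ is weakly descending in $\con(\A)$. By Proposition \ref{nsimple}, $\A$ is simple, so $\con(\A) = \{0_\A, 1_\A\}$ and each $\theta_m$ is one of these two congruences.

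Next I would pin down the top of the chain. By definition $\theta_1 = 1_\A$, and Proposition \ref{nnotnilpotent} gives $\theta_n = [\underbrace{1_\A, \dots, 1_\A}_{n}] = 1_\A$. Since $1_\A$ is the greatest element of $\con(\A)$ and the chain is weakly descending, $1_\A = \theta_n \leq \theta_m$ for every $m \leq n$, hence $\theta_m = 1_\A$ for all $m \leq n$. Then I would pin down the bottom of the chain: Proposition \ref{supernilpotent} gives $\theta_{n+1} = [\underbrace{1_\A, \dots, 1_\A}_{n+1}] = 0_\A$, and since $0_\A$ is the least element of $\con(\A)$ and the chain is weakly descending, $\theta_m \leq \theta_{n+1} = 0_\A$ for every $m > n$, hence $\theta_m = 0_\A$ for all $m > n$. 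Combining the two cases gives exactly the asserted description, and the algebra $\A$ of Section \ref{construction} is the required witness.

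There is essentially no obstacle at this stage: all the difficulty lives in the three propositions being invoked, and in particular in Proposition \ref{supernilpotent}, whose proof rested on Lemmas \ref{nfequal}, \ref{termnotequal}, and \ref{cornerequal} controlling the possible shapes of term operations on the distinguished elements. The only point requiring any care here is the indexing convention $(\theta_{m+1})_{m \in \omega}$, i.e.\ confirming that the series starts with $\theta_1 = 1_\A$; once that is fixed, weak monotonicity together with simplicity forces the chain to be constant at $1_\A$ through index $n$ and constant at $0_\A$ thereafter.
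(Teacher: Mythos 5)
Your proof is correct and follows exactly the route the paper intends: the paper states that the three propositions ``immediately entail'' the theorem, and your argument supplies precisely the missing glue (weak descent of the chain $[\alpha_1,\dots,\alpha_{m+1}] \leq [\alpha_1,\dots,\alpha_m]$ together with simplicity, so that $\theta_n = 1_\A$ and $\theta_{n+1} = 0_\A$ propagate to all indices). No issues.
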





\section{Acknowledgments}

This paper was inspired by earlier work on representing the binary commutator in \cite{kearnes-lampe-willard} by Keith A. Kearnes, William A. Lampe, and Ross Willard.


\end{document}